\renewcommand\H{\mathcal H}
\newcommand\R{\mathbb {R}}
\newcommand\e{{\sf e}}
\newcommand\g{{\mathfrak g}}
\newcommand\op{\mathsf{Op}}
\newcommand\tr{\mathrm{Tr}}
\newcommand\wG{\widehat{\sf G}}
\newcommand\G{{\sf G}}
\newcommand\Op{\mathfrak Op}
\newcommand\multiplication{\mathsf{Mult}}
\newcommand\convolution{\mathsf{Conv}}
\newcommand\fscr{\mathscr}
\begin{document}

\title*{Pseudo-differential operators associated to general type I locally compact groups}
\titlerunning{Pseudo-differential operators on groups} 
\author{M. M\u antoiu and M. Sandoval}
\institute{M. M\u antoiu \at Facultad de Ciencias, Departamento de M\'atematicas, Universidad de Chile, Las Palmeras 3645, Santiago, Chile \email{mantoiu@uchile.cl}
\and M. Sandoval \at Facultad de Ciencias, Universidad de Chile, Las Palmeras 3645,  Santiago, Chile \email{msandova@protonmail.com}}

\maketitle

\abstract{ In a recent paper by M. M\u antoiu and M. Ruzhansky, a global pseudo-differential calculus has been developed for \textit{unimodular} groups of type I. In the present article we generalize the main results to arbitrary locally compact groups of type I.  Our methods involve the use of Plancherel's theorem for non-unimodular groups.  We also make connections with a $C^*$-algebraic formalism, involving dynamical systems, and give explicit constructions for the group of affine transformations of the real line.}

\keywords{Locally compact group, Lie group, noncommutative Plancherel theorem, modular function, pseudo-differential operator, $C^*$-algebra.}

\medskip
\noindent
\textbf{2010 Mathematics Subject Classification}: Primary 46L65, 47G30, Secondary 22D10, 22D25.

\section{Introduction}\label{introduction}

Let $\G$ be a locally compact group with unitary dual $\widehat\G$\,, that is, the space of classes of unitary equivalence of (strongly continuous) unitary irreducible representations. It will be assumed that our groups are second countable and of type I. The formula (cf.~\cite{MR}, eq.~$(1.1)$ for the unimodular case)
\begin{equation}\label{eq:st}
\left[  \op(A)u \right](x) = \int_\G\!\int_{\widehat\G} \mathrm{Tr}\left(  A(x,\xi)D_\xi^{\frac 1 2} {\pi_\xi}(xy^{\scriptscriptstyle -1})^*\right)\Delta(y)^{- \frac 1 2} u(y) \, d\xi\, dy
\end{equation}
is the starting point for a global pseudo-differential calculus on $\G$\,. It involves suitable operator-valued symbols $A$ defined on $\G\times \widehat\G$\,, the modular function $\Delta$ of the group and the formal dimension operators $D_\xi$ introduced by Duflo and Moore~\cite{DM}. Formula (\ref{eq:st}) makes use of
the Haar and Plancherel measures on $\G$ and $\widehat\G$ respectively. We also fixed a measurable field of representations $(\pi_\xi)_{\xi \in \widehat\G}$ such that $\pi_\xi$ belongs to the
class $\xi$ and $\pi_\xi$ acts on a Hilbert space $\mathcal{H}_\xi$\,. 

\medskip
One of the advantages of using operator valued symbols is that one gets a global approach and a full symbol, free of localization choices, everything relying on harmonic analysis concepts attached to the group. Even for compact Lie groups there is no notion of full scalar-valued symbols for a pseudo-differential operator using local coordinates. For a more detailed discussion, for motivations and a full development of particular cases see~\cite{FR,RT,MR}.

\medskip
In the present article we are not going to rely on properties such as compactness or nilpotency nor on smoothness or unimodularity, and most hypothesis will be
on the measure theoretic side. The category of second countable type I locally compact groups has a nice integration theory and their unitary
duals have an amenable integration theory. For an introduction to this topic refer to~\cite{Fo}. This framework allows a general form of
Plancherel Theorem~\cite{DM,Fu,Tts}, which is all it is needed to develop the basic features of quantization even for a non-unimodular group.

\medskip
The interest of our extension comes mainly from the fact there are many important examples of non-unimodular groups. The simplest one is perhaps the affine group consisting of all the affine transformations of the real line, which is actually the only non-unimodular group in dimension two. In dimension three there are many infinite families of non-isomorphic non-unimodular Lie groups. Many other examples arise in the study of parabolic subgroups of semisimple Lie groups, that are used to investigate irreducible representations using extensions of Mackey's machine.

\medskip 
Formula (\ref{eq:st}) is a generalization of
the formula derived in~\cite[eq.~ (1.1)]{MR} for unimodular groups, with a difference on the order of the factors that has to do with the
choice of a convention for the Fourier transform (cf.~Remark~\ref{sec:alternative-fourier-trans}). Thus our quantization will
cover right invariant operators whereas the one in~\cite{MR} gives rise to left invariant operators. 

\medskip
Graded nilpotent Lie groups are treated systematically
in~\cite{FR} and in many other references. For a general treatment of pseudo-differential operators in a group setting, see~\cite{RT,FR}. The compact Lie groups are also treated by using global operator-valued symbols in~\cite{DR2,RT1,RT2,RTW,RW}. The recent articles containing applications and developments are too many to be cited here.
The idea of using the irreducible representations of a groups to define such calculus, seems to come from~\cite[Sect.\,1.2]{MT}, but it
was not developed in this abstract setting. All the books and articles mentioned earlier contain historical background and references to the existing
literature treating pseudo-differential operators and quantization in a group theoretic context. In many cases, specific properties of the group allow defining H\"ormander-type classes of symbols, and this has far-reaching consequences.

\medskip 
Another approach to a quantization consist of using the formalism of $C^*$- algebras. Given a locally compact group $\G$, there is an action by left (or right) translations on various
$C^*$-algebras of functions on $\G$\,. In such situations there are natural crossed products associated to them. Among the non-degenerate
representations of these $C^*$-algebras stands the Schr\"odinger representation, acting on the Hilbert space $L^2(\G)$\,. This formalism
allows to take full advantage of the theory of $C^*$-algebras in the setting of pseudo-differential operators.

\medskip
In the next two sections we introduce the notations and general harmonic analysis theory required for the quantization. In Section~\ref{sec:quantization} we make a preliminary
construction of the quantization $\op$\,. Section~\ref{sec:left-right-quant} includes a discussion on the difference and the connections between left and right quantizations, comming from the non-commutativity of the group, as well as the various $\tau$-quantizations related to ordering issues. Section~\ref{sec:some-oper-arris} exemplifies all these by showing how multiplication and convolution operators are covered by the calculus; some intricacies appear due to the presence of the modular function. The same can be said about the $C^*$-algebraic approach, which only gives a correction of the calculus ${\sf Op}$ by a factor defined by the modular function. In Section~\ref{firtanun}, for exponential groups, we put into evidence a new (but related) quantization applied to scalar symbols defined on the cotangent bundle of the group. The particular case of connected simply connected nilpotent groups has been treated in~\cite[Sect. 8]{MR}; see also~\cite{MR1} for refined results valid in the presence of flat coadjoint orbits. Other approaches in the nilpotent case can be found in~\cite{BFKG,CGGP,Me, Glo1, Glo2,Ta}.  In Section~\ref{pseudocalc}  we work out the case of the group of affine transformations of $\mathbb R$\,.

\section{General type I locally compact groups}\label{sec: Notation}

In this section we set up the general framework of the article.  

\medskip
We assume all Hilbert spaces $\mathcal{H}$ to be separable, using the convention that their scalar product $\langle \cdot,\cdot \rangle_\mathcal{H}$ is linear in the first variable and anti-linear in the second. By $\mathcal H ^\dag$ we denote the conjugate of $\H$, whose elements are the same, but the scalar product is defined as $\alpha \cdot u = \bar \alpha u$ and its inner product is conjugate to the one from $\mathcal H$, i.e. $\langle u,v \rangle_{\mathcal H^\dag} = \langle v , u\rangle_{\mathcal H}$\,. If $\pi$ is a strongly continuous unitary representation of a topological group $\G$ on a Hilbert space  $\mathcal{H}_\pi$\,, its contragradient representation $\pi^\dag$ acts on $\mathcal{H}_\pi^\dag$ by $\pi^\dag(x)f = \pi(x)f$\,; in general $\pi$ and $\pi^\dag$ are not equivalent.

\medskip
By $\mathbb{B} (\mathcal{ H})$ one denotes the $C^*$-algebra of all bounded linear operators on $\mathcal{H}$ and $\mathbb K (\mathcal{ H})$ stands for the two sided $^*$-ideal of compact operators on $\mathcal H$. We also make use of the Schatten-von Neumann classes $\mathbb {B}_p(\mathcal {H})$ for $p \geq 1$\,; these are Banach $^*$-algebras with the norm 
$\left\Vert\,T\,\right\Vert_{\mathbb{B}_p} = \mathrm{Tr}\left(( T^*T)^{p/2}\right)^{1/p}.$
The most important cases are $\mathbb{B}_1(\mathcal{H})$\,, the space of trace-class operators, and, for $p = 2$\,, the space of Hilbert-Schmidt operators, which endowed with the inner product
$\langle T,S\rangle_{\mathbb{B}_2}= \mathrm{Tr}\left(TS^*\right)$ is unitarily isomorphic with the Hilbert tensor product $\mathcal{H} \otimes \mathcal{H}^\dag$. 

\medskip
Let $\G$ be a (Hausdorff) locally compact group with unit $\e$; we also assume that it is second countable and of type I. Recall that a second countable group is separable, $\sigma$-compact and completely metrizable; in particular as a measurable space it will be standard. Also recall that a group is of type I if every factor representation is quasi-equivalent to a an irreducible one. For these groups their $C^*$-enveloping algebra are postliminal. 

\medskip
Let us fix a left Haar measure $\mu$ on $\G$\,, also denoted by $d\mu(x) =dx$\,. We get a right Haar measure $\mu^r$ defined by the formula $\mu^r(E) = \mu(E^{\scriptscriptstyle -1})$\,. Let $\Delta:\G \to (0,\infty)$ be the modular function of $\G$ satisfying $ \mu (E x) = \Delta(x) \mu(E)$ for measurable sets $E \subset\G$ and $ x\in\G$\,; this implies in particular that ${d\mu^r}= \Delta^{-1}d\mu$\,. The modular function is a continuous (smooth if $\G$ is a Lie group) homomorphism into the multiplicative group $\mathbb R_+$\,. We say that a group is \textit{unimodular} if the modular function is constant. For the convenience of the reader we recall that the modular function plays the following role on integration by substitution of variables
\begin{equation}\label{eq:change of variables}
  \int_\G f(y)\, dy = \int_\G \Delta(x) f(yx)\, dy = \int_\G\Delta(y)^{-1} f(y^{-1})\, dy\,.
\end{equation}

The spaces of $p$-integrable functions $L^p(\G) = L^p(\G,\mu)$ will always refer to the left Haar measure; these are separable Banach spaces for $p \in [1,\infty)$\,. By $C_c(\G)$ we denote the space of complex continuous functions on $\G$ with compact support, which is dense in $L^p(\G)$\,.
One has a Banach $^*$-algebra structure on $L^1(\G)$, with convolution
$$
(f * g)(x) = \int_\G f(y) g(y^{-1}x)\, dy = \int_\G \Delta(y)^{-1}f(xy^{-1}) g(y)\, dy\,,
$$
and involution given by $f^*(x) = \Delta(x)^{- 1} \overline{f (x^{-1})}$\,. In general, there is a $p$-dependent involution on $L^p(\G)$ given by
\begin{equation}\label{eq:involution}
f^*(x) = \Delta(x)^{-\frac 1 p} \overline{f (x^{ -1})}\,.
\end{equation}
But we reserve the notation $f^*$ for $p=2$\,. 

\medskip
Given a locally compact group $\G$\,, its \textit{unitary dual} $\widehat\G$ is the collection of all of its irreducible unitary representation modulo unitary equivalence. We endow $\widehat\G$ with
the Mackey Borel structure~\cite{Di}. It is known that being of type I is equivalent to $\widehat\G$ being countably separated and is also equivalent to being a standard Borel space.

\begin{example}
Some well-known examples~\cite{Fo} of type I groups are: (a) compact groups, (b) connected semisimple Lie groups, (c) Abelian groups, (d) exponentially solvable Lie groups, in particular connected simply connected nilpotent Lie groups, (e) real algebraic groups. It is known that a discrete group is of type I if and only if it possesses an Abelian normal subgroup of
finite index. 
\end{example}

For a representative $\pi_\xi \in \xi$ of an element of the unitary dual of $\G$\,, we set $\mathcal{H}_\xi = \mathcal{H}_{\pi_\xi}$. The left and right regular representations on $L^2(\G)$ are
$$
\big[\lambda_y (f)\big](x):= f(y^{ -1}x)\quad{\rm and}\quad\big[\rho_y(f)\big](x):= \Delta(y)^{\frac 1 2}f(xy)\,.
$$

We say that a standard measure $\nu$ on $\widehat\G$ is a \textit{Plancherel measure} if it yields a direct integral central decomposition of the regular
representations into irreducible representations. We do not worry to put this in formal terms, since we are only going to use some of its properties indicated in Theorem~\ref{sec:four-planch-transf}.

\medskip
Plancherel measures do exist for separable locally compact groups of type~I and in fact they are all equivalent (cf.~Theorem~\ref{sec:four-planch-transf} below). From now on, for a Plancherel measure $\nu$ we adopt the notation $d\nu(\xi) = d\xi$\,.

\medskip
There are various cases in which the Plancherel measure can be given explicitly. For Abelian groups, $\widehat\G$ is also an Abelian group (the Pontryagin dual) and the Plancherel measure coincides with one of its Haar measures. For connected, simply connected nilpotent Lie groups it corresponds to a measure on the space of coadjoint orbits arising from the Lebesgue measure on the dual $\mathfrak g^*$ of the Lie algebra. For compact groups the the Peter-Weyl theorem says that the irreducible representations form a discrete set and it describes the Plancherel measure.

\medskip
For non-unimodular groups an important role is played by \textit{the Duflo-Moore operators (also called formal dimension operators)}. They are densely defined positive self-adjoint operator with dense image $D_\pi: \mathrm{Dom}(D_\pi) \to \mathcal H_\pi$ and satisfy almost everywhere \textit{the semi-invariance condition }
\begin{equation}\label{eq:invrel}
\pi(x) D_\pi \pi(x)^* = \Delta(x)^{-1} D_\pi, \quad\forall\,x \in\G\,.
\end{equation}

In~\cite[p.~97]{Fu} an explicit construction of the operators $D_\pi$ is made for square integrable representations.  For unimodular groups, the operators $D_\pi$ are just multiplication by some positive scalar $d_\pi$, which coincides with the dimension of $\mathcal H_\pi$ when the later is finite.

\section{The Fourier and Plancherel transforms}\label{sec:fourier}

Now we recall some known results on the Fourier theory of non-unimodular groups of type I.

\medskip
Suppose we have fixed a Plancherel measure $\nu$ in $\widehat\G$\,, a measurable field of representations $(\pi_\xi)_{\xi \in \widehat\G}$ and there is a family of densely defined self-adjoint positive operators $D_\xi : \mathcal H_\xi \to \mathcal H_\xi$ satisfying (\ref{eq:invrel}) for $\nu$-almost all $\xi \in \widehat\G$\,. We define (in weak sense) the \textit{operator-valued Fourier transform} of a function $f \in L^1(\G)$ as
$$
(\mathcal{F}f)(\xi) \equiv \pi_\xi(f) = \int_\G f(y){\pi_\xi}(y) \, dy\,.
$$
The Fourier transform is a non-degenerate $^*$-representation of $L^1(\G)$\,, but in the non-unimodular case it fails to intertwine the two-sided regular representation of $\G$ with
$\int_{\widehat\G}^\oplus \xi \otimes \xi^\dag \, d\xi$ and it also fails to be an $L^2$-unitary map. So one introduces the \textit{Plancherel transform} of $f \in L^1(\G)\cap L^2(\G)$ as the operator
$$
(\mathcal Pf) (\xi) = \pi_\xi (f ) D_\xi ^{1/2}.
$$
In the following we denote by $\mathcal{P}f = \widehat f$ the Plancherel transform.

\medskip
We are going to present a partial formulation of the Plancherel Theorem for non-unimodular groups; for a proof in the case where $\G$
is unimodular we refer to~\cite{Di}. The non-unimodular Plancherel Theorem  was developed by N.~Tatsuuma in~\cite{Tts}, and latter an
extension of his theory, including some clarifications, has been obtained by Duflo and Moore~\cite{DM}.

\begin{theorem}\label{sec:four-planch-transf}
Let $\G$ be a type I second countable locally compact group. There exists a $\sigma$-finite Plancherel measure $\nu$ on $\widehat\G$\,, a measurable field of irreducible representations $(\pi_\xi)_{\xi \in \widehat\G}$ with $\pi_\xi \in \xi$\,, a measurable field $(D_\xi)_{\xi \in \widehat\G}$ of densely defined self-adjoint positive operators on $\mathcal{H}_\xi$ with dense image, satisfying (\ref{eq:invrel}) for $\nu$-almost every $\xi \in \widehat\G$\,, which have the following properties:
\begin{enumerate}
\item Let $f \in L^1(\G) \cap L^2(\G)$\,. For $\nu$-almost all $\xi \in \widehat\G$, the operator $\widehat f (\xi)$ extends to a Hilbert-Schmidt operator on $ \mathcal H_\xi$ and
$$
\left\Vert\,f\,\right\Vert ^2_2 = \int_{\widehat\G}\,\,\left\Vert\,\widehat f (\xi)\,\right\Vert_{\mathbb B_2}^2 d\xi\,.
$$
The Plancherel transformation extends in a unique way to a unitary operator
$$
\mathcal P: L^2(\G) \to \int^{\oplus}_{\widehat\G} \mathbb B _2(\mathcal H _\xi)\, d\xi\,.
$$
\item The Plancherel measure and the operator field satisfy the inversion formula
\begin{equation}\label{eq:plancherel inversion formula}
f(x) = \int_{\widehat\G} \tr \left(\widehat f (\xi)D_\xi^{\frac 1 2}{\pi_\xi}(x)^*\right)d\xi\,,
\end{equation}
for all $f$ in the Fourier algebra of $\,\G$ (see below). The inversion formula converges absolutely in the sense that $\widehat f(\xi) D_\xi^{\frac 1 2}$
extends to a trace-class operator $\nu$-a.e.~and the integral of the trace-class norms is finite.
\item Suppose there is another Plancherel measure $\nu'$ on $\widehat\G$ and measurable fields $({\pi_\xi}',{D_\xi}')_{\xi \in \widehat\G}$ that share the
properties above. Then $\nu$ and $\nu'$ are equivalent measures, and there is a measurable field of unitary operators $(U_\xi)_{\xi\in \widehat\G}\,$, intertwining $\pi_\xi$ and
${\pi_\xi}'\!$, such that for $\nu$-almost all $\xi \in \widehat\G$ the Radon-Nikodym derivative of $\nu'$ with respect to $\nu$ satisfies
$$
\frac {d\nu'} {d\nu}(\xi)\, {D_\xi}' = \, U_\xi\, D_\xi\, U_\xi^*\,. 
$$
\end{enumerate}
\end{theorem}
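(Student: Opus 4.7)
My plan is to reduce the statement to the central disintegration of the regular representation and then correct for non-unimodularity via the Duflo-Moore operators. Let $\mathcal M := \lambda(\G)''$ be the group von Neumann algebra acting on $L^2(\G)$. Since $\G$ is second countable and of type I, $\mathcal M$ is a type I von Neumann algebra with separable predual, so its center admits a disintegration over a standard Borel space which may be identified with $\widehat\G$ equipped with the Mackey Borel structure. This disintegration produces a $\sigma$-finite measure $\nu$ on $\widehat\G$ together with a measurable field $(\pi_\xi)_{\xi \in \widehat\G}$ of irreducible representations with $\pi_\xi \in \xi$, simultaneously decomposing $\lambda$ and $\rho$ into their irreducible fibers.

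The main obstacle is the construction of the operators $D_\xi$ that make the transform isometric, because without them the naive map $f\mapsto \pi_\xi(f)$ fails to send $L^1(\G)\cap L^2(\G)$ isometrically into Hilbert--Schmidt operators in the non-unimodular case. Following~\cite{DM,Tts}, one exploits the Plancherel weight $\varphi$ on $\mathcal M$, a semifinite faithful normal weight whose modular automorphism group acts by $\sigma^\varphi_t(\lambda(x)) = \Delta(x)^{it}\lambda(x)$. Disintegrating $\varphi$ over $\widehat\G$ produces, for $\nu$-almost every $\xi$, a densely defined positive self-adjoint operator $D_\xi$ on $\mathcal H_\xi$ with dense range; the preceding modular identity translates precisely into the semi-invariance condition (\ref{eq:invrel}), with $D_\xi^{1/2}$ playing the role of the fiberwise Radon--Nikodym cocycle that converts the non-tracial weight into the Hilbert--Schmidt inner product on each fiber.

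With $D_\xi$ in hand, define $(\mathcal P f)(\xi) := \pi_\xi(f)D_\xi^{1/2}$ on $C_c(\G)$. For (1), one first verifies the Parseval identity on matrix coefficients of square-integrable representations (where the Duflo--Moore construction is explicit) and then extends by density and the central decomposition, so that $\mathcal P$ extends uniquely to a unitary onto $\int^\oplus_{\widehat\G}\mathbb B_2(\mathcal H_\xi)\,d\xi$. For (2), the inversion formula (\ref{eq:plancherel inversion formula}) is obtained by polarizing the Parseval identity applied to $f_1 * f_2^*$ and evaluating at $\e$, and then using translation covariance of $\mathcal P$ under $\lambda$ to recover the value at an arbitrary $x$; the absolute convergence clause is built into the definition of the Fourier algebra as the set of $f$ for which $\xi\mapsto\|\widehat f(\xi)D_\xi^{1/2}\|_{\mathbb B_1}$ is integrable.

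For the uniqueness assertion (3), any other triple $(\nu',\pi'_\xi,D'_\xi)$ satisfying the same properties provides a second central disintegration of $\lambda$. By the uniqueness of such disintegrations for type I von Neumann algebras with separable predual, $\nu$ and $\nu'$ are mutually absolutely continuous and there exists a measurable field of unitaries $U_\xi : \mathcal H_\xi \to \mathcal H'_\xi$ intertwining $\pi_\xi$ and $\pi'_\xi$ almost everywhere. Writing the Parseval identity in both disintegrations and equating the two expressions for $\|f\|_2^2$ forces
$$
\frac{d\nu'}{d\nu}(\xi)\, D'_\xi \,=\, U_\xi\, D_\xi\, U_\xi^*
$$
on almost every fiber, since both sides are positive self-adjoint operators on $\mathcal H'_\xi$ satisfying the same semi-invariance relation with respect to $\pi'_\xi$, hence are determined up to a positive scalar, which the Radon--Nikodym factor absorbs. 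The delicate point throughout is the measurability of the fields $(\pi_\xi)$, $(D_\xi)$, and $(U_\xi)$, which is guaranteed by second countability and the standard Borel structure on $\widehat\G$.
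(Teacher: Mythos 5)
The paper does not actually prove this theorem: it is stated as a recalled result, with the authors explicitly deferring to Dixmier for the unimodular case and to Tatsuuma \cite{Tts}, Duflo--Moore \cite{DM} and F\"uhr \cite{Fu} for the non-unimodular one. So there is no in-paper argument to compare yours against line by line; what can be said is that your outline is a faithful reconstruction of the proof strategy in those references. The central decomposition of $\lambda(\G)''$ over the standard Borel space $\widehat\G$, the Plancherel weight with modular group $\sigma_t^\varphi(\lambda(x))=\Delta(x)^{it}\lambda(x)$, the fiberwise appearance of the semi-invariant operators $D_\xi$, the polarization-plus-translation-covariance derivation of the inversion formula on the Fourier algebra, and the Schur-type uniqueness of semi-invariant positive operators (which pins down $D'_\xi$ up to the scalar absorbed by $d\nu'/d\nu$) are exactly the ingredients of the Duflo--Moore/F\"uhr treatment. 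Two caveats: first, the step ``disintegrating $\varphi$ over $\widehat\G$ produces $D_\xi$'' is the technical heart of the theorem (direct integrals of unbounded closed operators, measurable selection of the field, domain issues in the semi-invariance identity), and your plan states it rather than proves it --- though you do flag this, and the paper itself operates at the same level of detail. Second, in item (2) be careful that the absolute-convergence clause is a \emph{theorem} about the Fourier algebra $A(\G)=\{f*g^\flat : f,g\in L^2(\G)\}$ (it is the content of the isomorphism $A(\G)\cong\mathscr B^\oplus_1(\wG)$ quoted after the definition), not something one may build into the definition of $A(\G)$ as you suggest; as written that part of your argument is circular, though easily repaired by invoking the trace-class estimate $\Vert\pi_\xi(f)\pi_\xi(g^\flat)D_\xi^{1/2}\Vert_{\mathbb B_1}\le\Vert\widehat f(\xi)\Vert_{\mathbb B_2}\Vert\widehat g(\xi)\Vert_{\mathbb B_2}$ and Cauchy--Schwarz in $\xi$.
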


For simplicity we make use of the following notation
$$
\mathscr B^\oplus_2(\wG)= \int^{\oplus}_{\widehat\G} \mathbb B_2(\mathcal H_\xi) \,d\xi\,, \quad\mathscr{B}^\oplus_1(\wG)
= \int^{\oplus}_{\widehat\G} \mathbb B _1(\mathcal H_\xi) D_\xi^{- \frac 1 2}\, d\xi\,,
$$
$$
\mathscr{B}^\oplus_2(\G\times\wG)= L^2(\G) \otimes \mathscr B^\oplus_2(\wG)\,, \quad\mathscr{B}_2^\oplus(\widehat\G\times\G)= \mathscr B^\oplus_2(\wG)\otimes L^2(\G)\,.
$$
$\mathscr{B}_2^\oplus(\G\times\wG)$\,, one of the natural spaces of symbols, has
the inner product 
$$
\langle A,B \rangle_{\mathscr{B}_2^\oplus} = \int_\G \int_{\widehat\G}\mathrm{Tr} \left[A(x,\xi)B(x,\xi)^* \right]d\xi\,dx\,.
$$

Most of the results of this sections are presented in the work~\cite{Fu}. In order to shed some light on the trace-class hypothesis imposed to our symbols, we elaborate a
little on the natural domain of the Plancherel transform such that formula (\ref{eq:plancherel inversion formula}) holds. We also give the natural domain for $\mathcal P^{-1}$.

\begin{definition}
The \textit{Fourier algebra}  $A(\G)$ of a locally compact group $\G$ is 
$$
A(\G)= \{ f * g^\flat \mid f,g \in L^2(\G) \}\,,
$$
where $g^\flat(x) = \overline {g(x^{-1})}$\,. If we endow $A(G)$  with the norm
$$
\left\Vert\,u\,\right\Vert_{A(\G)} = \inf \big\{\,\left\Vert f\,\right\Vert_2\left\Vert\,{g}\,\right\Vert_2\big\vert\, u = f * g^\flat\big\}\,,
$$
it becomes a Banach $^*$-algebra with convolution and $^\flat$ as the involution (it is the space of matrix coefficient functions of the left regular representation).
\end{definition}

In~\cite[Th. 4.12]{Fu} it is shown that the Plancherel transform induces an isomorphism between the Banach spaces $A(\G)$ and $\mathscr{B}^\oplus_1(\wG)$ and an isomorphism
$$
\mathcal{P} : A(\G) \cap L^2(\G) \to \mathscr{B}^\oplus_1(\wG) \cap\mathscr{B}^\oplus_2(\wG)\,.
$$
Next proposition (cf.~\cite{Fu} Theorem 4.15) shows that $A(\G) \cap L^2(\G)$ is the natural domain of the Plancherel transform in such a way that the inversion formula holds. It also shows that in the Plancherel side, the natural domain for the inversion formula is $\mathscr{B}^\oplus_1(\wG) \cap \mathscr{B}^\oplus_2(\wG)$\,.

\begin{proposition}\label{sec:plancheinversion}
Let $F \in \mathscr{B}^\oplus_2(\wG)$ and suppose that for $\nu$-almost everywhere the operator $F(\xi) D_\xi^{\frac 1 2}$ extends to a trace-class operator. Suppose moreover that
$$
\int_{\widehat\G}\,\,\left\Vert\,F(\xi) D_\xi^{\frac 1 2}\,\right\Vert_{\mathbb{B}_1}d\xi < \infty\,.
$$
If $f$ is the inverse Plancherel transformation of $F$, then we have $\mu$-almost everywhere
\begin{equation}
f(x) = \int_{\widehat\G}\,\tr \left( F(\xi) D_\xi^{\frac 1 2} \pi_\xi(x)^* \right) d\xi\,.
\end{equation}
\end{proposition}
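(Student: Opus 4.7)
The plan is to recognize the stated integrability hypotheses as the assertion that $F$ lies in the intersection $\mathscr B^\oplus_1(\wG) \cap \mathscr B^\oplus_2(\wG)$, and then to read off the conclusion from the two preceding results about the Fourier algebra.

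First I would unpack the notation introduced just before the proposition: by definition, $F \in \mathscr B^\oplus_1(\wG)$ means that $F(\xi) D_\xi^{1/2}$ extends to a trace-class operator for $\nu$-almost every $\xi$, with natural norm
$$
\int_{\widehat\G} \bigl\Vert F(\xi) D_\xi^{1/2} \bigr\Vert_{\mathbb B_1} d\xi < \infty,
$$
which is precisely the extra hypothesis imposed on $F$. Together with the assumption $F \in \mathscr B^\oplus_2(\wG)$, this yields $F \in \mathscr B^\oplus_1(\wG) \cap \mathscr B^\oplus_2(\wG)$.

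Next I would invoke the Führ isomorphism displayed before the proposition (\cite[Th.~4.12]{Fu}),
$$
\mathcal P : A(\G) \cap L^2(\G) \longrightarrow \mathscr B^\oplus_1(\wG) \cap \mathscr B^\oplus_2(\wG),
$$
to produce a unique $\tilde f \in A(\G) \cap L^2(\G)$ with $\mathcal P(\tilde f) = F$. Since $\mathcal P$ is unitary on $L^2(\G)$ and by hypothesis $\mathcal P(f) = F$, we obtain $f = \tilde f$ $\mu$-almost everywhere; in particular the inverse Plancherel transform of $F$ is represented by an element of the Fourier algebra. Applying the inversion statement of Theorem~\ref{sec:four-planch-transf}(2) to $\tilde f$ then gives the desired formula, with absolute convergence ensured by the pointwise bound
$$
\bigl\vert \tr \bigl( F(\xi) D_\xi^{1/2} \pi_\xi(x)^* \bigr) \bigr\vert \leq \bigl\Vert F(\xi) D_\xi^{1/2} \bigr\Vert_{\mathbb B_1},
$$
which uses only that $\pi_\xi(x)$ is unitary and is integrable in $\xi$ by hypothesis.

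The whole argument is thus a repackaging of the two results already recalled, and the only genuine obstacle is a bookkeeping one: one must verify that the natural norm on the symbol space $\mathscr B^\oplus_1(\wG) = \int^\oplus \mathbb B_1(\mathcal H_\xi) D_\xi^{-1/2} d\xi$, as introduced in this paper, agrees with the one used in the Führ isomorphism, and that no implicit normalization of the Plancherel measure or of the Duflo--Moore operators is being silently rescaled between the two sources. Once that identification is in hand, the proposition is a direct corollary of the material stated above.
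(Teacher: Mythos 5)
Your argument is correct and is essentially the paper's own route: the proposition is presented there as a direct consequence of F\"uhr's Theorems 4.12 and 4.15, which is precisely your reading of the hypotheses as $F\in\mathscr{B}^\oplus_1(\wG)\cap\mathscr{B}^\oplus_2(\wG)$, followed by the isomorphism onto $A(\G)\cap L^2(\G)$ and the inversion formula of Theorem~\ref{sec:four-planch-transf}(2). Your use of the injectivity of the unitary $L^2$-extension of $\mathcal P$ to identify $f$ with the Fourier-algebra representative $\tilde f$ is exactly the point that makes the citation legitimate, and the trace-norm bound you give settles the absolute convergence.
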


\section{The basic quantization}\label{sec:quantization}

In this section we introduce a quantization leading to a pseudo-differential calculus for operator valued symbols defined on the whole group involving its irreducible representation theory.  For this we fix a choice of a measurable field of representations $(\pi_\xi)_{\xi\in \widehat\G}$ and formal dimension operators $(D_\xi)_{\xi\in \widehat\G}$ such that Theorem~\ref{sec:four-planch-transf} holds. Different choices of the measurable fields of representation or Duflo-Moore operators lead to isomorphic formulations.

\medskip
For symbols $A\in L^2(\G) \otimes\big(\mathscr{B}^\oplus_1(\wG) \cap \mathscr{B}_2^\oplus(\wG)\big)$\,, set $\op(A): L^2(\G) \to L^2(\G)$ by
\begin{equation}\label{zgob}
\left[  \op(A)u \right](x) = \int_\G\int_{\widehat\G}\,\mathrm{Tr}\left( A(x,\xi)D_\xi^{\frac 1 2} {\pi_\xi}(xy^{\scriptscriptstyle -1})^*\right) \Delta(y)^{-\frac 1 2} u(y) \, d\xi\, dy\,.
\end{equation}
The operator $\op(A)$ is called the \textit{pseudo-differential operator} with symbol $A$\,. Let
$$
\textrm{ker}_A(x,y) = \Delta (y)^{- \frac 1 2}\!\int_{\widehat\G}\,\mathrm{Tr}\left(A(x,\xi)D_\xi^{\frac 1 2}\pi_\xi(xy^{-1})^* \right)d\xi\,.
$$
Since $A$ is in the domain of the inverse Plancherel transformation $\mathcal{P}_2$ in the second variable, the above integral converges absolutely and
\begin{equation}\label{dinou}
\textrm{ker}_A(x,y) = \big(\mathcal{P}_2^{-1} A\big)(x,xy^{-1}) \Delta(y)^{- \frac1 2}.
\end{equation}
By Plancherel's theorem and the change of variables given by (\ref{eq:change of variables}), we conclude that $\textrm{ker}_A$ is a square integrable function on $\G\times\G$\,; hence $\op(A)$ \textit{is a Hilbert-Schmidt operator with kernel $\textrm{ker}_A$ and Hilbert-Schmidt norm}
$$
\left\Vert\,\op(A)\,\right\Vert_{\mathbb{B}_2} = \left\Vert\,A\,\right\Vert_{\mathscr{B}^\oplus_2}.
$$
So we may extend the definition of $\op(A)$ for arbitrary symbols $A\in \mathscr{B}^\oplus_2(\G\times\wG)$\,, using the previous formula and the fact that $L^2(\G) \otimes\big(\mathscr{B}^\oplus_1(\wG) \cap\mathscr{B}_2^\oplus(\G)\big)$ is a dense subset of $\mathscr{B}^\oplus_2(\G\times\wG)$\,.

\begin{remark}
The factors $D_\xi^{\frac 1 2}$ and $\Delta(y)^{-\frac 1 2}$ disappear from (\ref{zgob}) if $\,\G$ is unimodular. By setting $\G = \mathbb R^n$, under the identification of $\,\widehat\G$ with $\mathbb R^n$ given by $\xi(x) = e^{-2 \pi i \langle \xi ,x \rangle}$, we recover the Kohn-Nirenberg calculus.
\end{remark}

\begin{remark}\label{pro: Op is unitary}
Let us define by $\Lambda_{u,v}(w) = \langle w,u \rangle v\,,\,\forall\,w\in L^2(\G)$\,,
the rank-one operator associated to the pair $(u,v)$. One shows easily that $\Lambda_{u,v}\!=\!\op(\mathcal V_{u,v})$ for {\rm the Wigner transform of $(u,v)$}
\begin{eqnarray}
\mathcal V_{u,v}(x,\xi) = \int_\G \Delta(y^{\scriptscriptstyle -1}x)^{\frac 1 2}\overline{u(y^{\scriptscriptstyle -1}x)}v(x)  {\pi_\xi} (y)D_\xi^{\frac 1 2}\,dy\,.\nonumber
\end{eqnarray}
\end{remark}

\begin{remark}\label{stiuka}
The fact that $\op$ is an isomorphism allows us to define a product which we will call the \textit{Moyal product}, and an involution on $\mathscr{B}(\G\times\wG)$ by 
$$
\op(A\#B) = \op(A) \op(B)\quad{\rm and}\quad\op(A^\#) = \op(A)^*.
$$
\end{remark}

\begin{remark} 
We also note that the left regular representation of $\G$ induces a representation acting on $\mathscr{B}^\oplus_2(\G\times\wG)$\,. Let $A\in\mathscr{B}^\oplus_2(\G\times\wG)$ be a symbol and $y \in\G$\,, then $\lambda_y \op(A)\lambda_y^*$ is a Hilbert-Schmidt operator, hence there is some other symbol $y.a\in\mathscr{B}^\oplus_2(\G\times\wG)$ such that
$$
\lambda_y \op(A)\lambda_y^* = \op(y.A)\,.
$$
It is easy to see that this is an action of the group, composed of a left translation in the  first variable and a unitary equivalence in the representation space: 
$$
(y.A)(x,\xi) = \pi_\xi (y)A(y^{-1}x,\xi)\pi_\xi (y)^*.
$$
\end{remark}

For compact Lie groups, in~\cite{RT}, and for graded nilpotent groups, in~\cite{FR} (see in both cases references therein), H\"ormander-type classes of functions have been developed for the global operator-valued pseudo-differential calculus. This leads to many interesting applications. Such a task is more difficult for larger classes of groups, and impossible in the generality of the present paper. We indicate, however, a way to extend the quantization procedure; it is still useful, but much less effective than the $S^m_{\rho,\delta}$-formalism, which is not available without strong extra structure.

\medskip
Assume for simplicity that $\G$ is a type I Lie group. One sets $\mathcal D(\G):=\mathcal C^\infty_{\rm c}(\G)$ with the usual inductive limit topology. The strong dual of $\mathcal D(\G)$\,, denoted by $\mathcal D'(\G)$\,, is composed of distributions. Let us define 
$$
\mathscr D(\wG):={\fscr F}[\mathcal D(\G)]\subset\mathscr B^\oplus_2(\wG)\cap\mathscr B^\oplus_1(\wG)
$$ 
with the locally convex topological structure transported from $\mathcal D(\G)$ and then the projective tensor product
$$
\mathscr D\big(\G\times\wG\big):=\mathcal D(\G)\,\overline\otimes\,\mathscr D(\wG)\subset\mathscr B^\oplus_2(\G\times\wG)\,.
$$
Also using its strong dual, one gets a Gelfand triple 
$$
\mathscr D\big(\G\times\wG\big)\hookrightarrow\mathscr B^\oplus_2(\G\times\wG)\hookrightarrow\mathscr D'\big(\G\times\wG\big)\,.
$$
Then the pseudo-differential calculus $\,{\sf Op}:\mathscr B^\oplus_2(\G\times\wG)\rightarrow\mathbb B^2\big[L^2(\G)\big]$
\begin{itemize}
\item
restricts to an isomorphism $\,{\sf Op}:\mathscr D(\G\times\wG)\rightarrow\mathbb B\big(\mathcal D'(\G);\mathcal D(\G)\big)$\,,
\item
extends to an isomorphism $\,{\sf Op}:\mathscr D'(\G\times\wG)\rightarrow\mathbb B\big(\mathcal D(\G);\mathcal D'(\G)\big)$\,.
\end{itemize}

We have denoted above by $\mathbb B(\mathcal A;\mathcal B)$ the space of all linear continuous mappings between the locally convex vector spaces $\mathcal A$ and $\mathcal B$\,. The proof is an easy adaptation of the proof of the corresponding result in~\cite[Sect. 5]{MR} and relies on the form (\ref{dinou}) of the kernel and on Schwartz's Kernel Theorem. The partial inverse Plancherel tranform $\mathcal P_2^{-1}$ is taken into account by the definition of $\mathscr D(\wG)$\,, while the change of variables $y\mapsto xy^{-1}$ and multiplication by $\Delta^{-1/2}$ are $\mathcal D$-isomorphisms.

\begin{remark}
If $\G$ is not a Lie group, one still can perform the same extension procedure using the Bruhat spaces $\mathcal D(\G)$ and $\mathcal D'(\G)$ introduced in~\cite{Br}. This has been done in~\cite[Sect. 5]{MR} for unimodular groups, but one can adapt everything to the non-unimodular setting. We only sketched here the Lie case for space reasons and because, any way, this is the most important.
\end{remark}

\section{Other quantizations}\label{sec:left-right-quant}

Having in mind the familiar Kohn-Nirenberg quantization for $\G = \mathbb R^n$, one notes that for non abelian groups there are two possible generalizations, connected to the non-commutativity in $\mathbb B(\H)$\,: a left quantization $\op_L\equiv\op$\,, the one used so far, and a right quantization 
$$
\left[  \op_R(a)u \right](x) = \int_\G\int_{\widehat\G} \mathrm{Tr}\left(a(x,\xi)D_\xi^{\frac 1 2} {\pi_\xi}(y^{\scriptscriptstyle -1}x)^*\right)\Delta(xy^{\scriptscriptstyle -1})^{\frac 1 2} u(y) \, d\xi\, dy\,. 
$$
Actually, this two quantizations are equivalent in the following sense: 

\begin{proposition}\label{grozav}
Let $A$ be a symbol, and consider the symbol defined by 
$$
\tilde A(x,\xi) = \pi_\xi(x)^* A(x,\xi)\pi_\xi(x)\,.
$$
Then $\op_L(A) = \op_R(\tilde A)$\,. 
\end{proposition}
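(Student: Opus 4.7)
The plan is to start from the definition of $\op_R(\tilde A)$ and algebraically massage the integrand until it matches the defining formula for $\op_L(A)$. All of the work happens inside the trace, together with some bookkeeping of modular factors.

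First I would substitute $\tilde A(x,\xi) = \pi_\xi(x)^* A(x,\xi)\pi_\xi(x)$ into the defining formula for $\op_R$ and use cyclicity of the trace to push the leftmost $\pi_\xi(x)^*$ to the far right, so the integrand becomes
$$
\mathrm{Tr}\left(A(x,\xi)\,\pi_\xi(x)\,D_\xi^{\frac 1 2}\,\pi_\xi(y^{-1}x)^*\pi_\xi(x)^*\right)\Delta(xy^{-1})^{\frac 1 2}\,u(y)\,.
$$
The next step is to apply the semi-invariance relation (\ref{eq:invrel}) in its square-root form $\pi_\xi(x)D_\xi^{1/2}\pi_\xi(x)^* = \Delta(x)^{-1/2}D_\xi^{1/2}$. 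This identity requires a short justification since (\ref{eq:invrel}) is stated for $D_\xi$ itself: both sides are positive self-adjoint operators whose squares agree, so they coincide by uniqueness of the positive square root.

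With this in hand I would insert $\pi_\xi(x)^*\pi_\xi(x) = \mathbb 1$ between $D_\xi^{1/2}$ and $\pi_\xi(y^{-1}x)^*$ (or equivalently rewrite $\pi_\xi(x)D_\xi^{1/2} = \Delta(x)^{-1/2}D_\xi^{1/2}\pi_\xi(x)$) so that a factor $\Delta(x)^{-1/2}$ is extracted and $D_\xi^{1/2}$ is liberated on the right of $A(x,\xi)$. What remains multiplying $D_\xi^{1/2}$ on the right is
$$
\pi_\xi(x)\,\pi_\xi(y^{-1}x)^*\,\pi_\xi(x)^* = \pi_\xi(y)\,\pi_\xi(x)^* = \pi_\xi(xy^{-1})^*,
$$
using that $\pi_\xi$ is a homomorphism and unitary. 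Finally, the modular factors combine as $\Delta(xy^{-1})^{1/2}\Delta(x)^{-1/2} = \Delta(y)^{-1/2}$, producing exactly the integrand that defines $\op_L(A)u$ in (\ref{zgob}).

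The only genuinely delicate point is the passage from the semi-invariance on $D_\xi$ to the one on $D_\xi^{1/2}$; after that the argument is purely formal manipulation inside the trace. The identity first has to be verified on the dense subset of symbols where (\ref{zgob}) is defined by an absolutely convergent integral, and then extended by continuity via the isometry $\op:\mathscr B^\oplus_2(\G\times\wG)\to\mathbb B_2[L^2(\G)]$ once one observes that $A\mapsto \tilde A$ preserves the $\mathscr B^\oplus_2$-norm (since $\pi_\xi(x)$ is unitary pointwise, hence does not affect the Hilbert--Schmidt norm of $A(x,\xi)$).
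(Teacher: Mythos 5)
Your proof is correct and follows essentially the same route as the paper: a formal computation inside the trace combining cyclicity, the homomorphism property of $\pi_\xi$, and the semi-invariance relation in its square-root form $\pi_\xi(x)D_\xi^{1/2}\pi_\xi(x)^*=\Delta(x)^{-1/2}D_\xi^{1/2}$. You additionally justify the passage from (\ref{eq:invrel}) to its square-root version and the extension by density, both of which the paper leaves implicit; these are welcome but do not change the argument.
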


\begin{proof}
The result follows from the following computation
\begin{eqnarray}
\mathrm{Tr}\left(\tilde A(x,\xi)D_\xi^{\frac 1 2} {\pi_\xi}(y^{\scriptscriptstyle -1}x)^*\right)&=&\mathrm{Tr}\left(\pi_\xi(x)^*A(x,\xi)\pi_\xi(x)D_\xi^{\frac 1 2} {\pi_\xi}(y^{\scriptscriptstyle -1}x)^*\right)\nonumber\\
&=&\Delta(x)^{-\frac 1 2}\mathrm{Tr}\left(\pi_\xi(x)^*A(x,\xi)D_\xi^{\frac 1 2}\pi_\xi(x) {\pi_\xi}(y^{\scriptscriptstyle -1}x)^*\right)\nonumber\\
&=&\Delta(x)^{-\frac 1 2}\mathrm{Tr}\left(A(x,\xi)D_\xi^{\frac 1 2}{\pi_\xi}(xy^{\scriptscriptstyle -1})^*\right)\,,\nonumber
\end{eqnarray}
based on the properties of the trace, of the representation $\pi_\xi$ and on the semi-invariance formula (\ref{eq:invrel}).
\end{proof}

\begin{remark}\label{tracas}
The usual pseudo-differential calculus in $\R^n$ disposes of an extra parameter $\tau\in[0,1]$\, connected to ordering issues in the quantization, arising from the non-commutativity of the operators (positions, momenta) that are behind its definition. This can also be implemented in our general situation (both for the left and for the right quantization), using a (any!) measurable function $\tau:\G\to\G$\,. In favorable cases there is even a symmetric quantization, having special properties, analog to the Weyl calculus ($\tau=1/2$) for the particular group $\G=\R^n$. This has been explained in~\cite{MR}, it can be extended to our non-unimodular groups, but we are not going to indicate here the easy adaptations.
\end{remark}

One encounters in the next section formulae showing that left (respectively right) convolution operators emerge naturally from the left (respectively right) quantization. The $\tau$ parameter deals (in the extreme cases $\tau(x)=\e$ and $\tau(x)=x$) with setting multiplication operators to the left or to the right of convolution operators.

\begin{remark}\label{sec:alternative-fourier-trans}
Alternatively, we could also define the Plancherel transform as
$$
[\dot\mathcal{P}(f)](\xi) = D_\xi ^{\frac 1 2} \int_\G f(x) {\pi_\xi} (x)^*\, dx\,.
$$
For unimodular groups, this was the choice in~\cite{MR} and it lead to somehow different formulae than the present ones, so we indicate briefly the relation. By the semi-invariance relation and the involution (\ref{eq:involution}) for $p=2$\,, one has
\begin{eqnarray}
&[\dot\mathcal{P}(f)](\xi)=\int_\G f(x){\pi_\xi}(x)^* D_\xi^{\frac 1 2} \Delta(x)^{-\frac 1 2}\,dx \nonumber \\
&= \int_\G f(x^{\scriptscriptstyle -1})\Delta(x)^{-\frac 1 2} {\pi_\xi}(x)D_\xi^{\frac 1 2}\,dx = [\mathcal{P} ({\bar{f}^*})](\xi) \nonumber\,.
\end{eqnarray}
\textit{So the two definitions differ by an automorphism $f\mapsto{\bar{f}^*}$ of $L^2(\G)$\,.} Another thing to have in mind is that the inversion formula (\ref{eq:plancherel inversion formula}) this time reads
\begin{eqnarray}
f(x) = \bar{f}^*(x^{-1}) \Delta(x)^{-\frac 1 2} 
= \int_{\widehat\G} \tr\left(\mathcal{P} (\bar{f}^*) D_\xi^{\frac 1 2}\pi_\xi(x^{-1})^*\right)  \Delta(x)^{-\frac 1 2}\, d\xi \nonumber\\
= \int_{\widehat\G} \tr \left( D_\xi^{\frac 1 2} \dot{\mathcal{P}} (f)\pi_\xi(x)\right) \, d\xi\,.\nonumber
\end{eqnarray}
\end{remark}

\section{Some operators arising from the calculus}\label{sec:some-oper-arris}

Important families of operators in $L^2(\G)$ are formed of multiplication and convolution operators. We show now how to recover
them using our pseudo-differential calculus; the non-unimodular case has some particular features, due to the presence of the modular function and of the formal dimension operators.

\medskip
For two square integrable functions $f,g \in L^2(\G)$ we define the operators
\begin{eqnarray}
\big[\multiplication_f(u)\big](x) = f(x)u(x)\,,  \nonumber\\
\big[\convolution_g^L (u)\big](x) = \int_\G g(y) u(y^{-1}x)\,dy\,. \nonumber
\end{eqnarray}

\begin{remark}
In general $\multiplication_f$ and $\convolution_g^L$ are not bounded in $L^2(\G)$\,. In fact $\multiplication_f$ is bounded if and only if $f$ is essentially bounded and $\convolution_g^L$ is bounded if and only if $\,{\rm ess\,sup}_{\xi \in \widehat\G}\,\left\Vert\,[\mathcal F(g)](\xi)\,\right\Vert\!<\!\infty$\,. The second assertion follows from the easy formula (note that here $\mathcal F(g)$ is relevant, not $\widehat g=\mathcal P(g)$!)
$$
\mathcal P\circ\convolution_g^L=\int_{\wG}^\oplus[\mathcal F(g)](\xi)d\xi\circ\mathcal P\,.
$$
\end{remark}

But for unimodular groups the composition $\multiplication_f\,\convolution_g^L$ does extend to a Hilbert-Schmidt operator. Assume $\G$ unimodular (so $\mathcal F$ and $\mathcal P$ coincide), let $f,g \in L^2(\G)$ and define the symbol $A$ by
$$
A(x,\xi) =  f(x)\, \widehat g (\xi)\,.
$$
Using Plancherel inversion formula one gets $\op_L(A) u = f \cdot (g * u)$\,. For non-unimodular groups the picture changes dramatically. The operators $\multiplication_f\circ \convolution_g$ are no longer Hilbert-Schmidt; in fact one has that
$$
\left\Vert\,{\multiplication_f\,\convolution_g^L }\,\right\Vert_{\mathbb{B}_2} = \left\Vert{\,\Delta^{-\frac 1 2} f }\,\right\Vert_2\left\Vert\,\Delta^{\frac 1 2} g\,\right\Vert_2.
$$
In general $\multiplication_f\,\convolution_g$ is not even a bounded operator if $f$ and $g$ are not chosen in a suitable manner.

\medskip
One way to fix this is taking functions in appropriate dense subspaces. Choose $f,g \in L^2(\G)$ such that the functions $\Delta^{-\frac 1 2}f,\,\Delta^{\frac 1 2}g$ are square integrable and set
$$
A(x, \xi) = \big(\Delta^{-\frac 1 2}f\big)(x) \widehat{\big(\Delta^{\frac 1 2} g\big)}(\xi)\,.
$$
Then for $u \in L^2(G)$ we have $\Op(A)u = f \cdot ( g * u )$\,, which may be written
$$
\multiplication_f\,\convolution_g^L=\op\big[\big(\Delta^{-\frac 1 2}f\big)\otimes\big(\widehat{\Delta^{\frac 1 2}g}\big)\big]\,.
$$ 
Indeed, by Plancherel inversion
\begin{eqnarray}
[\op(A)u ](x)&= \int_\G \Delta(x)^{- \frac 1 2} f(x)\Delta(xy^{-1})^{\frac 1 2}g(xy^{-1})\Delta(y)^{-\frac 1 2} u(y)\,dy   \nonumber \\
&=  f(x) \int_\G \Delta(y)^{-1}g(xy^{-1})u(y)\, dy = f(x)( g * u )(x)\,. \nonumber
\end{eqnarray}
Another way to express the relation between symbols of the form $A = f \otimes \widehat g$ and operators of multiplication and
convolution is given in the following formulas
\begin{eqnarray}
\op_L\big(f\otimes \widehat g\,\big) = \multiplication_f\,\convolution_g^L\,\multiplication_{\Delta^{1/2}} = \multiplication_{\Delta^{1/2}f}\,\convolution_{\Delta^{-1/2}g}^L\,,\nonumber\\
\label{sec:right-quant-conv-form}
\op_R\,\big(f\otimes \widehat g\,\big) = \multiplication_f\,\convolution_{\Delta^{1/2}g}^R = \multiplication_{\Delta^{1/2}f}\,\convolution_g^R\,\multiplication_{\Delta^{-1/2}}\,;\nonumber
\label{sec:left-quant-conv-form}
\end{eqnarray}
here $\convolution_g^R$ is the operator given by $\convolution_g ^R(u) = u * g$\,.

\medskip
There are other convolution operators that appear in the literature. In~\cite[Sect.\,1.2]{De} the author introduces (right) convolution operators by
$$
\big[\check \convolution_g^R(u)\big](x) = \int_\G u(xy)\Delta(y)^{\frac 1 2} g(y)\, dy =(u*\bar{g}^* )(x)\,.
$$
These operators are then used to study the space of left invariant operators. We leave to the reader the task of finding the relevant connections with our left and right quantizations. Anyhow, when studying these operators in $L^p$-spaces, corrections of the powers of the factors $\Delta$ seem natural and useful.

\section{The $C^*$-algebraic formalism}\label{sec:crossed-product-c}

We introduce first some tools from the theory of crossed products of $C^*$-algebras.

\begin{definition}
  A \emph{$C^*$-dynamical system}\index{$C^*$-dynamical system} is a triplet $(\mathcal{A},\G,\alpha)$\,, where $\G$ is a locally compact group, $\mathcal{A}$ is a $C^*$-algebra and $\alpha:\G \to \mathrm{Aut}({\mathcal A})$ is a strongly continuous representation of $\G$\,.
\end{definition}

To such a $C^*$-dynamical system we associate the space $L^1(\G;\mathcal{A})$ of all Bochner-integrable functions $F:\G\to \mathcal{A}$\,; it is a Banach $^*$-algebra with laws 
\begin{eqnarray}
  (F \star G)(x) = \int_\G F(y)\,\alpha_y \!\left( G(y^{-1}x) \right)dy\,, \nonumber\\
  F^\star(x) = \Delta(x)^{-1}\alpha_x \!\left( F(x^{-1})^*\right). \nonumber
\end{eqnarray}
The Banach $^*$-algebra $L^1(G;\mathcal{A})$ is naturally isomorphic to the projective tensor product $\mathcal{A} \otimes L^1(\G)$\,. Consider the \emph{universal norm}\index{universal norm} on $L^1(\G;{\mathcal A})$ given by
$$
\left\Vert\,F\,\right\Vert_{\mathcal{A} \rJoin\G} = \sup_\rho  \left\Vert{\,\rho(F)\,}\right\Vert ,
$$
where the supremum is taken over all non-degenerate $^*$-representations.  The \emph{crossed product}\index{crossed product of $C^*$-algebras}
$\mathcal{A} \rJoin\G$ is the enveloping $C^*$-algebra of $L^1(\G;\mathcal{A})$\,, that is, its completion under the norm $\left\Vert{\cdot }\right\Vert_{\mathcal{A} \rJoin\G}$\,.

\begin{example}
Let $\mathcal A$ be a $C^*$-algebra, $\G=\{\e\}$ the trivial group and $\alpha$ the trivial representation; then $\mathcal A \rJoin\G$ is naturally isomorphic to $\mathcal A$\,. The group $C^*$-algebra $C^*(\G)$ is obtained taking $\mathcal A=\mathbb C$\,. 
\end{example}

\begin{definition}
A \emph{covariant representation}\index{covariant representation} of a $C^*$-dynamical system $(\mathcal{A},\G,\alpha)$ is composed of a unitary representation $\pi$ of $\G$ and a non-degenerate $^*$-representation $\rho$ of $\mathcal{A}$\,, both acting on a Hilbert space $\mathcal{H}$, in such a way that they satisfy the relation
$$
\pi(x) \rho (f) \pi (x) ^* = \rho(\alpha_x f)\,, \qquad f \in {\mathcal A}\,, x \in\G\,.
$$
We denote this data as the triplet  $(\rho,\pi,\mathcal{H})$\,.
\end{definition}

\begin{example}\label{radeberger}
The most interesting example is that attached to a continuous action of $\,\G$ by homeomorphisms of a locally compact space $\Omega$\,; this induces an action on $\mathrm{ C_0(\Omega)}$ given by $[\alpha_x(f)](\omega) = f(x^{-1}\!\cdot\omega)$\,. Then $(C_0(\Omega),\G,\alpha)$ is a $C^*$-dynamical system and it encapsulates all the information of the group action. A covariant representations is the same as a system of imprimitivity~\cite{Mac} Sect.\,3.7. In fact there is a one-to-one correspondence between topological actions of a group $\G$ and $C^*$-dynamical systems $(\mathcal{A}\,,\G,\alpha)$ where the $C^*$-algebra $\mathcal{A}$ is Abelian; this can be easily seen from the fact that every abelian $C^*$-algebra is of the form $C_0(\Omega)$ for some locally compact space and there is a one to one correspondence between strongly continuous representations $\alpha :\G \to \mathrm{Aut}(C_0(\Omega))$ and continuous actions of $\G$ on $\Omega$~\cite[Proposition 2.7]{Wi}.
\end{example}

Any covariant representation $(\rho,\pi,\mathcal{H})$ of a $C^*$-dynamical system induces a non-degenerate $^*$-representation $\rho \rJoin \pi$ of the crossed product $\mathcal{A} \rJoin\G$ on $\mathcal{H}$, the unique extension of the representation of $L^1(G;\mathcal{A})$ given by 
$$
(\rho \rJoin \pi)(F)  = \int_\G \rho\left(F(y)\right)\pi(y)\,dy\,.
$$
This process sets up a bijection between the covariant representation of a $C^*$-dynamical system and the non-degenerate $^*$-representations of the
crossed product associated to it~\cite[Proposition 2.40]{Wi}.

\medskip
There is a natural covariant representation associated to any left-invariant $C^*$-algebra of functions defined on $\G$\,. Let $\mathcal{A}$ be a left-invariant $C^*$-subalgebra of the space of bounded left uniformly continuous functions on $\G$\,. For an $\mathcal{A}$-valued function $F$ on $\G$ and elements $x,z \in\G$ we make
the convenient identification $[F(x)](z) = F(z,x)$\,. The triplet $(\mathcal{A},\G,\alpha)$ is a $C^*$-dynamical system when endowed with the
action $\alpha:\G \to \mathrm{Aut}(\mathcal{A})$ given by $[\alpha_x(F)](z,y) =F(z,x^{-1}y)$\,.  Then our convolution and involution laws are given by 
\begin{eqnarray}
(F \star G)(z,x) = \int_G F(z,y)\, G(y^{-1}z,y^{-1}x)\,dy\,,\nonumber \\
F^\star(z,x) = \Delta(x)^{-1}\overline{F(x^{-1}z,x^{-1})}\,. \nonumber
\end{eqnarray}
In $\mathcal{H}=L^2(\G)$ we have a covariant representation of $(\mathcal{A},\G,\alpha)$ given by
\begin{eqnarray}
[\lambda_x(u)](y) = u (x^{-1}y)\,,\quad [\multiplication_f(u)](y)= f(y) u(y)\,. \nonumber
\end{eqnarray}
The \emph{Schr\"odinger representation}\index{Schr\"odinger representation} is the integrated form ${\sf Sch} = \multiplication \rJoin \lambda$ of
$\mathcal{A} \rJoin\G$\,; more explicitly, for a function $F \in L^1(\G;\mathcal{A})$
\begin{equation}\label{dinger}
[{\sf Sch}(F)u](x)= \int_\G F(x,y)u(y^{-1}x)\, dy= \int_\G F(x,xy^{-1})\Delta(y)^{-1}u(y)\,dy\,.
\end{equation}
One gets an integral operator ${\sf Sch}(F)={\sf Int}(L_F)$ with kernel
$$
L_F(x,y)=F(x,xy^{-1})\Delta(y)^{-1}.
$$
Using (\ref{eq:change of variables}) and the left invariance of the Haar measure, one gets
$$
\int_\G\!\int_\G |\,L_F(x,y)\,|^{\,p}\,dxdy=\int_\G\!\int_\G |\,F(x,y)\,|^{\,p}\Delta(x^{-1}y)^{p-1}dxdy\,.
$$
Only for $p=1$ the correspondence $F\mapsto L_F$ is an $L^p(\G\times\G)$-isometry. This has consequences upon the relevance of the Schr\"odinger representation of the crossed product for our pseudo-differential calculus in the non-unimodular case. Extending the procedure in~\cite[Sect. 7]{MR}, one would like to define
$$
\mathfrak{Op}(A)={\sf Sch}\big[\mathcal P_2^{-1}(A)\big]\,,
$$
composing the Schr\"odinger representation with the unitary inverse Plancherel
transformation $\mathcal P^{-1}_2:\mathscr B^\oplus_2(\G\times\wG)\to L^2(\G\times\G)$\,. A
direct computation leads to
$$
\left[\mathfrak{Op}(A)u \right](x) = \int_\G\int_{\widehat\G}\,\mathrm{Tr}\left(A(x,\xi)D_\xi^{\frac 1 2} {\pi_\xi}(xy^{\scriptscriptstyle -1})^*\right) \Delta(y)^{-1} u(y) \, d\xi dy\,,
$$
wich differs from (\ref{zgob}) by a factor $\Delta(y)^{-1/2}$. Thus one has 
\begin{equation}\label{folosit}
\mathfrak{Op}(A)=\op(A)\circ{\sf Mult}_{\Delta^{-1/2}}\,,
\end{equation}
with a non-trivial correction if $\G$ is not unimodular, loosing in general the good square integrability properties of $\op$\,.

\begin{remark}
It is still legitimate to study the quantization $A\mapsto\mathfrak{Op}(A)$ even for groups that are not unimodular. Involving arbitrary  left-invariant $C^*$-algebras $\mathcal A$ of functions defined on $\G$\,, this allows a direct study of "pseudo-differential operators with coefficients of type $\mathcal A$\,". In addition, having the crossed product construction in the background, it allows extending the results concerning essential spectra and Fredholm properties of global pseudo-differential operators, that have been obtained in~\cite{Ma} only when $\Delta=1$\,. The key role is played by the Gelfand spectrum $\Omega(\mathcal A)$ of $\mathcal A$ which, under suitable assumptions, is a compactification of $\G$ on which $\G$ acts continuously. The relevant spectral information is contained in the orbit structure of the boundary $\Omega(\mathcal A)\setminus\G$\,.
\end{remark}

\begin{remark}
Adding a $2$-cocycle to the formalism and studying twisted operators is also possible, as done in~\cite{BM} for unimodular groups. For $\G=\R^n$ this reproduces the gauge covariant magnetic Weyl calculus~\cite{IMP,MP,MPR}.
\end{remark}

\begin{remark}
One can also extend~\cite[Subsect. 7.4]{MR}, in which covariant families of pseudo-differential operators are introduced  starting from suitable symbols associated to general Abelian $C^*$-dynamical systems, as in Example~\ref{radeberger}. This is a natural way to generate interesting classes of random Hamiltonians.
\end{remark}

\section{The case of exponential Lie groups}\label{firtanun}

Let $\G$ be an exponential group with Lie algebra $\mathfrak g$ and exponential map ${\sf exp}:\mathfrak g\to\G$ (a diffeomorphism, by definition) with inverse ${\log}:\G\to\mathfrak g$\,. Such a group is second countable and type I, so it fits in our setting.

\medskip
Let us set $\sigma:={\log}(\mu)$ for the image through ${\log}$ of the (fixed) Haar measure on $\G$\,.
It is known how it is related to the Lebesgue measure on $\mathfrak g$\,: in terms of the adjoint action ${\rm ad}:\g\to{\rm aut}(\g)$ one has $d\sigma(X)=\theta(X)dX$\,, where
$$
\theta(X)=\Big\vert\det\frac{1-e^{-{\rm ad}_X}}{{\rm ad}_X}\,\Big\vert\,.
$$
Thus we have a unitary operator
$$
U_\theta:L^2(\mathfrak g;\theta(X)dX)\to L^2(\mathfrak g;dX)\,,\quad U_\theta(v):=\theta^{1/2}\,v\,.
$$
Consequently, one also gets the unitary operators
$$
{\sf Exp}:L^2(\G)\to L^2(\mathfrak g;\theta(X)dX)\,,\quad {\sf Exp}(u):=u\circ{\sf exp}\,,
$$
$$
{\sf Exp_\theta}:L^2(\G)\to L^2(\mathfrak g;dX)\,,\quad {\sf Exp}_\theta(u):=(U_\theta\circ{\sf Exp})(u)=\theta^{1/2}(u\circ{\sf exp})\,.
$$
Let $\mathfrak g^*$ be the dual of the Lie algebra. There is a unitary Fourier transformation $\,\mathcal F\!_{\mathfrak g\mathfrak g^*}:L^2(\mathfrak g;dX)\rightarrow L^2(\g^*;d\mathcal X)$ associated to the duality 
$$
\langle\cdot\!\mid\!\cdot\rangle:\g\times\g^*\rightarrow\mathbb R\,,\;\langle Y\!\!\mid\!\!\mathcal X\rangle:=\mathcal X(Y)\,.
$$ 
It is defined by
$$
\big[\mathcal F\!_{\mathfrak g\mathfrak g^*}(v)\big](\mathcal X)=\int_{\g}e^{-i\langle X\mid \mathcal X\rangle}v(X)dX\,,
$$
with inverse given (for a suitable normalization of $d\mathcal X$\,) by
$$
\big[\mathcal F^{-1}_{\mathfrak g\mathfrak g^*}(w)\big](X)=\int_{\g^*}\!e^{i\langle X\mid \mathcal X\rangle} w(\mathcal X)d\mathcal X.
$$
Then the most important transformation
$$
\mathcal F_{\G\mathfrak g^*}=\mathcal F\!_{\mathfrak g\mathfrak g^*}\!\circ{\sf Exp_\theta}:L^2(\G)\to L^2(\mathfrak g^*;d\mathcal X)
$$
is given explicitly by
$$
\big[\mathcal F_{\G\mathfrak g^*}(u)\big](\mathcal X)\!=\!\!\int_\mathfrak g e^{-i\langle X\mid \mathcal X\rangle}u[{\exp(X)}]\,\theta(X)^{\frac 1 2}dX\!=\!\!\int_\G\!e^{-i\langle{\log}(x)\mid \mathcal X\rangle}u(x)\theta[{\log}(x)]^{-\frac 1 2}dx\,,
$$
with inverse
$$
\Big[\mathcal F_{\G\mathfrak g^*}^{-1}(w)\Big](x)\,=\,\theta[{\log}(x)]^{-1/2}\!\int_{\mathfrak g^*} e^{i\langle{\log}(x)\mid \mathcal X\rangle}w(\mathcal X)\,d\mathcal X.
$$
Thus, recalling the unitarity of the Plancherel transformation, the operator 
$$
\mathcal L:=\mathcal P\circ\mathcal F_{\G,\mathfrak g^*}^{-1}:L^2(\mathfrak g^*;d\mathcal X)\to \mathscr B^\oplus_2(\wG)
$$
is also unitary. Just by using the definitions, one has 
$$
[\mathcal L(w)](\xi)=\int_\G\!\int_{\mathfrak g^*} e^{i\langle{\log}(x)\mid \mathcal X\rangle}\theta[{\log}(x)]^{-\frac 1 2}w(\mathcal X)\pi_\xi(x)D_\xi^{\frac 1 2}dx d\mathcal X,
$$
with inverse given explicitly on $\mathscr B^\oplus_1(\wG)\cap\mathscr B^\oplus_2(\wG)$ by
$$
\big[\mathcal L^{-1}(v)\big](\mathcal X)=\int_\G\!\int_{\wG}  e^{-i\langle{\log}\,x\mid \mathcal X\rangle}\theta[{\log}(x)]^{-\frac 1 2}{\rm Tr}_\xi\big[ v(\xi)D_\xi^{\frac 1 2}\pi_\xi(x)^*\big]\,dxd\xi\,.
$$

We make a notational convention: If $\,\mathcal T:\mathcal M\to\mathcal N$ is a linear transformation between Hilbert spaces, we denote by 
$$
\mathbf T\equiv{\sf id}\otimes\mathcal T:L^2(\G)\otimes\mathcal M\to L^2(\G)\otimes\mathcal N
$$ 
the obvious operator. Taking into account the unitary isomorphism $\mathcal L$\,, as well as the unitary pseudodifferential calculus $\op:L^2(\G)\otimes\mathscr B^\oplus_2(\wG)\to\mathbb B_2[L^2(\G)]$\,, one defines
$$
{\sf op}=\op\circ\mathbf L=\op\circ\mathbf P\circ\mathbf F_{\G\mathfrak g^*}^{-1}:L^2(\G)\otimes L^2(\mathfrak g^*;d\mathcal X)\to\mathbb B_2[L^2(\G)]\,,
$$
seen as an attempt to quantize the cotangent bundle $T^*\G\cong\G\times\g^*$. By construction, ${\sf op}$ is a unitary transformation. The simplest way to get an explicit form, is to recall formula (\ref{folosit}):
$$
{\sf op}({\sf B})=\op[\mathbf L({\sf B})]={\sf Sch}\big(\mathbf P^{-1}[\mathbf L({\sf B})]\big)\circ\multiplication_{\Delta^{1/2}}={\sf Sch}\big(\mathbf F^{-1}_{\G\g^*}({\sf B})\big)\circ\multiplication_{\Delta^{1/2}}\,,
$$
which together with (\ref{dinger}) leads to
$$
[{\sf op}({\sf B})u](x)=\int_\G\!\int_{\g^*}e^{i\langle{\log}(xy^{-1})\mid \mathcal X\rangle}\theta[{\log}(xy^{-1})]^{-\frac 1 2}{\sf B}(x,\mathcal X)\Delta(y)^{-\frac 1 2}dyd\mathcal X.
$$

\section{The affine group}\label{pseudocalc}

In this section we indicate basic formulae for pseudo-differential calculi on the affine group of the real line. The theory of unitary representations of the Affine group has been worked out by Gelfand and Na\"imark; see~\cite[Sect 6.7]{Fo}. In this section
$$
\G = \{ (b,a) \in \mathbb R^2 \mid a>0\}=\R\times\R_+\,,
$$
denotes the affine group, with product law
$$
(b,a)\cdot (b',a') = (ab'+b,aa')\,.
$$
The group $\G$ is a type I solvable Lie group, it is an obvious semi-direct product, and it is the connected component of the identity of a similar connected Lie group for which the restriction is only $a\ne 0$\,. The left Haar measure is $a^{\scriptscriptstyle -2}dadb$\,, and its right Haar measure is $a^{\scriptstyle-1}dadb$\,, hence the modular function is given by
$\Delta(b,a) = a^{\scriptscriptstyle -1}.$

\medskip
Let $\mathfrak g \equiv \mathbb R^2$ be the Lie algebra of $\G$\,, with bracket defined by
$$
[(\beta,\alpha),(\beta',\alpha')] = (\alpha \beta'- \alpha' \beta,0)\,.
$$
An easy computation gives
$$
\theta(\beta,\alpha)=\frac{1-e^{-\alpha}}{\alpha}\,.
$$
The exponential map 
$$
{\sf exp}: \mathfrak g \to\G\,,\quad {\sf exp}(\beta,\alpha) = \Big( \frac \beta \alpha (e^\alpha - 1) , e^\alpha\Big)
$$ 
is a diffeomorphism. Its inverse is (a limit is necessary for $a = 1$)
$$
{\log}(b,a) = \Big(\frac b {a-1} \log(a), \log(a)\Big)\,.
$$

Thus we have all the elements needed to express the quantization. With the preliminary computation
$$
{\log}\big[(b,a)(b_1,a_1)^{-1}\big]={\log}\Big(b-\frac{ab_1}{a_1},\frac{a}{a_1}\Big)=\Big(\frac{a_1b-ab_1}{a-a_1}\log\big(\frac{a}{a_1}\big),\log\big(\frac{a}{a_1}\big)\Big)\,,
$$
one arrives at
\begin{equation}
\begin{array}{r c l}
[{\sf op}({\sf B})u](b,a)&=&\int_\R\!\int_{\R_+}\!\int_\R\!\int_\R e^{i\log(\frac{a}{a_1})[\mathfrak x+\frac{a_1b-ab_1}{a-a_1}\mathfrak y]}{\sf B}\big((b,a),(\mathfrak y,\mathfrak x)\big)\\
 & \ & \quad  u(b_1,a_1)\,\Big[\frac{a}{a-a_1}\log\big(\frac{a}{a_1}\big)\Big]^{1/2}\!a_1^{-3/2}da_1db_1d\mathfrak y d\mathfrak x\,.
\end{array}
\end{equation}
One of the interesting properties of $\G$ is that its unitary dual
consists only of two points with positive Plancherel measure equal to
$1$ each and a $\nu$-null set of one-dimensional representations~\cite[Sect. 6.7]{Fo}, that we afford neglecting. Setting
$\R_-=-\R_+=(-\infty,0)$ and $\H_\pm=L^2(\R_\pm;ds)$\,, the two irreducible
representations admit the realization 
$$
\pi_\pm:\G\to\mathbb B(\H_\pm)\,,\quad[\pi_\pm(b,a)\varphi](s) = a^{1/2}e^{2\pi i bs} \varphi(a s)\,.
$$
Of course, they are the restrictions to $\H_\pm$ (respectively) of an obvious representation on $L^2(\R)$\,.
We denote the equivalence class of $\pi_\pm$ in $\widehat\G$ by $\xi_\pm$\,; these are square integrable irreducible representations, having positive Plancherel measure.
The corresponding Duflo-Moore operators are 
$$
(D_{\xi_\pm}\varphi)(s) = \left\vert s \right\vert\varphi(s)=\pm s\varphi(s)\,,\quad\pm s>0\,.
$$

For a symbol $A\in\mathscr B^\oplus_2(\G\times\wG)\cong L^2(\G)\oplus L^2(\G)$\,, formula (\ref{eq:st}) reads
$$
[ \op(A)u](b,a) =\sum_\pm\int_{\mathbb R}\!\int_{\mathbb R_+}\!\frac 1 {(a')^{3/2} } \mathrm{Tr}\left(A_\pm(b,a)D_\pm^{\frac 1 2}\pi_\pm\!\left( b - \frac a {a'} b'\!,\frac a {a'}\right)\!^*\right) u(b',a') \, db'\!da'.
$$
Since in this case $\widehat{\G}$ is measurably equivalent to a two-point space, our symbols can be seen as pairs of functions depending only on the group variable $(b,a)\in \G$\,. However, since these symbols have operator values in $\H_\pm=L^2(\R_\pm)$\,, they generate a non-commutative calculus; to get the commutative calculus of multiplication operators one has to restrict to scalar valued symbols only depending on the variable in $\G$\,, that are very particular.

\begin{remark}
Let $Y =(0,1)$ and $Z= (1,0)$ be the generators of $\mathfrak g$\,; they satisfy the commutation relation $[Y,Z] = Z$\,. If $d\pi_+(X)\varphi = \frac {d}{dt}\pi_+[{\sf
exp}(tX)]\varphi\!\mid_{t=0}$ gives the (densely defined) induced representation of $\mathfrak g$ on $L^2(\mathbb R)$\,, then
$$
[d\pi_+(Y)\varphi](s) = \frac 1 2 \varphi(s) + s\varphi'(s)\,,\quad[d\pi_+(Z)\varphi](s) = 2 \pi i s
\varphi(s)\,.
$$
Clearly $[d\pi_+(Y),d\pi_+(Z)] = d\pi_+(Z)$\,. Note that
$$
id\pi_+(Y) = \frac i 2 \left( s \cdot \frac d {ds } + \frac d { ds} \cdot
  s\right)
$$
is (formally) the infinitesimal generator of dilations of $\,\mathbb R_+$\,. Similar statements hold for the sign $-$\,.
\end{remark}

\begin{acknowledgement}
M. Sandoval has been supported by Beca de Magister Nacional 2016 Conicyt and partially supported by N\'ucleo Milenio de F\'{\i}sica Matem\'atica RC120002.
M. M\u{a}ntoiu is supported by the Fondecyt Project 1120300.
\end{acknowledgement}

\end{document}